\documentclass[proceedings,submission%
%% if you want to use pdftex and pdflatex doesn't do well for you,
%% uncomment the following line
%%,pdftex%
%% if you have difficulties with hyperref uncomment the following line
%%,nohyperref%
%% if you have difficulties with fonts uncomment the following line
%%,notimes%
]{dmtcs}

% DON'T LOAD ANY STYLES THAT CHANGE THE PAGE LAYOUT
% AND DON'T CHANGE THE PAGE LAYOUT BY HAND, EITHER.

\usepackage[latin1]{inputenc}
\usepackage{subfigure}
\usepackage{amsmath,amssymb,relsize,color,float,rotfloat,afterpage,comment}
\usepackage{youngtab,theorem}

% graphicx is now loaded automatically no need to put this in here anymore.
%
%\usepackage{graphicx}

% just comment this out if you don't have natbib
%\usepackage[round]{natbib}

\newcommand{\mysize}[1]{\text{{\small #1}}}

\newcommand{\smalla}{\mysize{0}}
\newcommand{\smallb}{\mysize{1}}
\newcommand{\smallc}{\mysize{2}}
\newcommand{\smalld}{\mysize{3}}
\newcommand{\smalle}{\mysize{4}}
\newcommand{\smallf}{\mysize{5}}
\newcommand{\smallg}{\mysize{6}}
\newcommand{\smallh}{\mysize{7}}
\newcommand{\smalli}{\mysize{8}}

\newtheorem{thm}{Theorem}
\newtheorem{lemma}[thm]{Lemma}

\newtheorem{prop}[thm]{Proposition}

\newcommand{\moinsm}{\mysize{-1}}
\newcommand{\moinsmm}{\mysize{-2}}
\newcommand{\moinsmmm}{\mysize{-3}}

\newcommand{\complexi}{\mathfrak{i}}

\DeclareMathOperator{\UnitOp}{U}

\DeclareMathOperator{\id}{Id}

\newcommand{\fracG}{\frac{G(k+1)^2}{G(2k+1)}}

\newcommand{\Repeat}[2]{\left(\left\{#2\right\}^{#1}\right)}
\newcommand{\Ones}[1]{\Repeat{#1}{1}}

\newcommand{\Unit}[1]{{\UnitOp(#1)}}

\newcommand{\ud}{\text{d}}
\newcommand{\averageLeft}{\left<}
\newcommand{\averageRight}{\right>_\Unit{N}}
\newcommand{\average}[1]{\averageLeft #1 \averageRight}

\newcommand{\schur}{\mathfrak{s}}
\newcommand{\partition}{\vdash}

% YEP...

\author{Paul-Olivier Dehaye}
\title[Moments of derivatives of characteristic polynomials]{A note on moments of derivatives of characteristic polynomials}

\address{ETH Z\"urich, Department of Mathematics, 8092 Z\"urich, Switzerland, \textsf{pdehaye@math.ethz.ch}}
\keywords{random matrix theory, hook-content formula, moment of characteristic polynomials, shifted Schur function, generalized Pochhammer symbol, hypergeometric function of matrix argument }
% No need to include the dates
%\received{(Some date $\leq$ 1$^{\mbox{\footnotesize{st}}}$ December 2009)}
%\revised{\today}
%\accepted{tomorrow}

\begin{document}
\maketitle
\begin{abstract}
\paragraph{Abstract.}
We present a simple technique to compute moments of derivatives of unitary characteristic polynomials. The first part of the technique relies on an idea of Bump and Gamburd: it uses orthonormality of Schur functions over unitary groups to compute matrix averages of characteristic polynomials. In order to consider derivatives of those polynomials, we here need the added strength of the Generalized Binomial Theorem of Okounkov and Olshanski. This result is very natural as it provides coefficients for the Taylor expansions of Schur functions, in terms of shifted Schur functions. The answer is finally given as a sum over partitions of functions of the contents.  One can also obtain alternative expressions involving hypergeometric functions of matrix arguments. 

\paragraph{R\'esum\'e.}
Nous introduisons une nouvelle technique, en deux parties, pour calculer les moments de d\'eriv\'ees de polyn\^omes caract\'eristiques. La premi\`ere \'etape repose sur une id\'ee de Bump et Gamburd et utilise l'orthonormalit\'e des fonctions de Schur sur les groupes unitaires pour calculer des moyennes de polyn\^omes caract\'eristiques de matrices al\'eatoires. La deuxi\`eme \'etape, qui est n\'ecessaire pour passer aux d\'eriv\'ees, utilise une g\'en\'eralisation du th\'eor\`eme binomial  due \`a Okounkov et Olshanski. Ce th\'eor\`eme livre les coefficients des s\'eries de Taylor pour les fonctions de Schur sous la forme de ``shifted Schur functions''. La r\'eponse finale est donn\'ee sous forme de somme sur les partitions de fonctions des contenus. Nous obtenons aussi d'autres expressions en terme de fonctions hyperg\'eom\'etriques d'argument matriciel.
\end{abstract}

\section{Introduction}
\label{introduction}
We take for the characteristic polynomial of a $N\times N$ unitary matrix $U$
\begin{eqnarray}
Z_U(\theta) & := & \prod_{j=1}^N \left(1-e^{\complexi(\theta_j-\theta)}\right),
\end{eqnarray}
where the $\theta_j$s are the eigenangles of $U$ and set 
\begin{eqnarray}
V_U(\theta) & := & e^{\complexi N (\theta + \pi)/2} e^{-\complexi
  \sum_{j=1}^N \theta_j/2} Z_U(\theta).
\label{VAppear}
\end{eqnarray}
It is easily checked that for real $\theta$, $V_U(\theta)$ is real and that $|V_U(\theta)|$ equals $|Z_U(\theta)|$. 

For $k$ and $r$ integers, with $0\le r \le 2k$, we will investigate the averages (with respect to Haar measure)
\begin{eqnarray}
\left(\mathcal{M}\right)_N(2k,r)&:=&\average{\left|Z_U(0)\right|^{2k} \left( \frac{Z_U'(0)}{Z_U(0)} \right)^{r}},\\
\left|\mathcal{V}\right|_N(2k,r)&:=&\average{\left|V_U(0)\right|^{2k} \left|
    \frac{V_U'(0)}{V_U(0)} \right|^{r}}.
\end{eqnarray}
This is notation we already used in \cite{DehayeJoint} (where a $\left|\mathcal{M}\right|$ was also present but is not needed here), and is only notation in the LHS: the $\left(\mathcal{M}\right)$ and  $\left|\mathcal{V}\right|$ are thus meant each as \emph{one} symbol and are supposed to mnemotechnically remind the reader of what is in the RHS. We 
immediately state the following easy lemma.
\begin{lemma} 
\label{firstLemma}
For $k\ge h$ non-negative integers, we have the relation
\begin{eqnarray}
\left|\mathcal{V}\right|_N(2k,2h)&=& \sum_{i=0}^{2h} \binom{2h}{i} \left(\mathcal{M}\right)_N(2k,i) \left(\frac{\mathfrak{i}N}{2}\right)^{2h-i}.
\end{eqnarray}
\end{lemma}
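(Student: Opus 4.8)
The plan is to reduce everything to a single algebraic identity relating the logarithmic derivatives of $V_U$ and $Z_U$ at the origin, and then to expand by the ordinary binomial theorem. First I would view the prefactor in \eqref{VAppear} as a function of $\theta$ and notice that only the factor $e^{\complexi N\theta/2}$ actually depends on $\theta$; the remaining part $e^{\complexi N\pi/2}\,e^{-\complexi\sum_j\theta_j/2}$ is constant once $U$ is fixed. Taking the logarithmic derivative therefore annihilates that constant and extracts $\complexi N/2$ from the exponential, giving
\[
\frac{V_U'(\theta)}{V_U(\theta)}=\frac{\complexi N}{2}+\frac{Z_U'(\theta)}{Z_U(\theta)},
\]
and in particular $\dfrac{V_U'(0)}{V_U(0)}=\dfrac{\complexi N}{2}+\dfrac{Z_U'(0)}{Z_U(0)}$.

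Next I would dispose of the absolute values, relying on the remark following \eqref{VAppear}. Since $V_U(\theta)$ is real for real $\theta$, its derivative $V_U'(\theta)$ is real as well, so $\dfrac{V_U'(0)}{V_U(0)}$ is a real number; because the exponent $2h$ is even this forces $\left|\dfrac{V_U'(0)}{V_U(0)}\right|^{2h}=\left(\dfrac{V_U'(0)}{V_U(0)}\right)^{2h}$. Combining this with $|V_U(0)|=|Z_U(0)|$, which gives $|V_U(0)|^{2k}=|Z_U(0)|^{2k}$, I can rewrite the integrand defining $\left|\mathcal{V}\right|_N(2k,2h)$ entirely in terms of $Z_U$.

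Then I would substitute the log-derivative identity and apply the binomial theorem, expanding $\left(\dfrac{\complexi N}{2}+\dfrac{Z_U'(0)}{Z_U(0)}\right)^{2h}$ as $\sum_{i=0}^{2h}\binom{2h}{i}\left(\dfrac{Z_U'(0)}{Z_U(0)}\right)^{i}\left(\dfrac{\complexi N}{2}\right)^{2h-i}$. Multiplying through by $|Z_U(0)|^{2k}$, pulling the scalar factors $\binom{2h}{i}\left(\complexi N/2\right)^{2h-i}$ outside the Haar average, and invoking linearity of the expectation, each term becomes exactly $\binom{2h}{i}\left(\complexi N/2\right)^{2h-i}\left(\mathcal{M}\right)_N(2k,i)$, which reassembles into the claimed sum.

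Since each step is essentially forced, I do not expect a genuine obstacle here; the only point demanding care is the bookkeeping with the moduli, which works precisely because $V_U$ is real and $2h$ is even (so passing to absolute values is harmless on the left), while the complex-valued powers of $Z_U'(0)/Z_U(0)$ are kept intact inside the $\left(\mathcal{M}\right)_N(2k,i)$ on the right. One may also tacitly ignore the measure-zero event $V_U(0)=0$, on which the logarithmic derivative is undefined but which does not affect the averages.
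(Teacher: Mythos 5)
Your proof is correct, and it is worth noting how it relates to what the paper actually does: the paper's ``proof'' of Lemma~\ref{firstLemma} is essentially a citation --- it quotes from \cite{DehayeJoint} the double-sum identity
\begin{equation*}
\left|\mathcal{V}\right|_N(2k,2h)  = \sum_{j=0}^{h} \binom{h}{j}
\left(\frac{-N^2}{4}\right)^{h-j} \sum_{l=0}^j (\complexi N)^{j-l}
\binom{j}{l} (\mathcal{M})_N(2k,j+l),
\end{equation*}
which arises from first squaring the logarithmic derivative, writing $\left(V_U'/V_U\right)^2 = \left(Z_U'/Z_U\right)^2 + \complexi N \left(Z_U'/Z_U\right) - N^2/4$, and then expanding the $h$-th power of that trinomial. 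You instead expand $\left(\complexi N/2 + Z_U'/Z_U\right)^{2h}$ directly by the binomial theorem, which produces the single sum of the lemma statement with no rearrangement needed; the two expansions are of course equal as polynomial identities in $Z_U'/Z_U$. What your route buys is a genuinely self-contained argument: you derive the key relation $V_U'/V_U = \complexi N/2 + Z_U'/Z_U$ from Equation~(\ref{VAppear}), and --- importantly --- you make explicit the one point the paper leaves entirely implicit, namely that the absolute value on the left can be dropped because $V_U$ (hence $V_U'$) is real on the real axis and the exponent $2h$ is even; this is exactly where evenness of the exponent enters, and the lemma would be false without it. Two minor points you could tighten: the hypothesis $k\ge h$ is not only about the measure-zero set where $V_U(0)=0$, but is what makes every term finite (near an eigenangle at distance $\delta$ from $0$ the integrand behaves like $\delta^{2k-i}$ with $i\le 2h\le 2k$, so each $(\mathcal{M})_N(2k,i)$ with $i \le 2h$ is a well-defined average and linearity of expectation is legitimate); and the realness of $V_U'$ on the reals deserves the one-line justification that $V_U$ is entire and real-valued on $\mathbb{R}$, so its derivative along the real axis is real.
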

\begin{proof}
This is available (in the same notation) in \cite{DehayeJoint} and a consequence of Equation~(\ref{VAppear}), which leads to the polynomial relations between $\frac{Z_U'(0)}{Z_U(0)}$, $\frac{V_U'(0)}{V_U(0)}$ and their norms. These relations give 
\begin{eqnarray}
\left|\mathcal{V}\right|_N(2k,2h)  &=& \sum_{j=0}^{h} \binom{h}{j}
\left(\frac{-N^2}{4}\right)^{h-j} \sum_{l=0}^j (\complexi N)^{j-l}
\binom{j}{l} (\mathcal{M})_N(2k,j+l),
\end{eqnarray}
which is easily deduced from \cite{DehayeJoint}. 
\end{proof}

We are actually more concerned with the renormalizations
\begin{eqnarray}
\left(\mathcal{M}\right)(2k,r) &=&\lim_{N \rightarrow \infty} \frac{\left(\mathcal{M}\right)_N(2k,r)}{N^{k^2+r}}\\
\left|\mathcal{V}\right|(2k,r) &=&\lim_{N \rightarrow \infty} \frac{\left|\mathcal{V}\right|_N(2k,r)}{N^{k^2+r}}.
\end{eqnarray}
Theorem~\ref{sumPartitionsThm} will show that these normalizations are appropriate.

The random matrix theory problem of evaluating $\left(\mathcal{M}\right)(2k,r)$ and $\left|\mathcal{V}\right|(2k,r)$ has applications in number theory (see \cite{DehayeJoint} for a more detailed exposition of these ideas). Indeed, these values are related to the factor $g(k,h)$ in the formula
\begin{eqnarray}
\lim_{T \rightarrow \infty}\frac{1}{T}\frac{1}{(\log \frac{T}{2\pi})^{k^2+2h}} \int_0^T \left|\zeta\left(\frac{1}{2} +\complexi t\right)\right|^{2k-2h} \left|\zeta'\left(\frac{1}{2} +\complexi t\right)\right|^{2h} \ud t &=& a(k) g(k,h),
\end{eqnarray}
where $a(k)$ is a (known) factor defined as a product over primes. This, along with a discrete moment version due to Hughes, is the principal underlying motivation for the all the random matrix theory analysis that occurs in \cite{HughesFirst,HughesThesis,HughesJoint,Mezzadri2003,CRS,ForresterWitte}.

Another application is tied to the work of Hall \cite{Hall2002,HallWirtinger,Hall2004,Hall}, where results on the objects studied here can be used to hint towards optimizations of rigorous arguments in number theory, and serve as (conjectural) inputs on theorems there. The number theory statements concern average spacings between zeroes of the Riemann zeta function. The works of Steuding \cite{Steuding} and Saker \cite{Saker} follow similar approaches.

In \cite{DehayeJoint}, the author investigated for fixed $r$ ratios of those quantities and established they were a rational function. We reprove this result here, but with a much simpler method leading to a much simpler result. In particular, rationality of the RHS is transparent from the following statement (definitions are given in Section~\ref{definitions}), since the RHS sums are finite sums over partitions of $r$:

\begin{thm}
\label{sumPartitionsThm}
For $0 \le r\le 2k$, with $r,k \in \naturals$,
\begin{eqnarray}
\frac{(\mathcal{M})_N(2k,r)}{(\mathcal{M})_N(2k,0)}\complexi^r &=&  
  \sum_{\mu\partition r}\frac{r!}{h_\mu^2} \frac{(N\uparrow \mu) ((-k) \uparrow \mu)  }{(-2k) \uparrow \mu},
\label{finiteNformula}
\\
\label{limitFormula}
\frac{(\mathcal{M})(2k,r)}{(\mathcal{M})(2k,0)}\complexi^r &=&  
  \sum_{\mu\partition r}\frac{r!}{h_\mu^2} \frac{ k \uparrow \mu  }{(2k) \uparrow \mu},
\end{eqnarray}
while the denominators on the left are known (\cite{BumpGamburd},\cite{KS}):
\begin{eqnarray}
(\mathcal{M})_N(2k,0) &=& \schur_{\left<N^k\right>}\Ones{2k}=\frac{G(N+2k+1) G(N+1) }{ G(N+k+1)^2} \fracG
\end{eqnarray}
and
\begin{eqnarray}
 (\mathcal{M})(2k,0) &=& \lim_{N\rightarrow \infty} \frac{ \schur_{\left<N^k\right>}\Ones{2k}}{N^{k^2}}=\fracG,
\end{eqnarray}
where $G(\cdot)$ is the Barnes $G$-function.
\end{thm}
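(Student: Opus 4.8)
The plan is to follow the two-step strategy announced in the abstract: first turn a \emph{shifted} average of characteristic polynomials into a single Schur function via the Bump--Gamburd device, and then read off the derivative moments from its Taylor expansion using the Okounkov--Olshanski Generalized Binomial Theorem. First I would introduce auxiliary arguments and consider
\[
\average{\prod_{i=1}^k Z_U(\alpha_i)\prod_{j=1}^k \overline{Z_U(\beta_j)}}.
\]
Since $Z_U(\alpha)=\det(1-e^{-\complexi\alpha}U)$ up to a scalar, this is an average of a product of $k$ ``holomorphic'' and $k$ ``antiholomorphic'' characteristic polynomials, and the Bump--Gamburd argument---expanding each factor into Schur polynomials (the irreducible characters of $\Unit{N}$) and using their orthonormality under Haar measure to project onto the trivial component---collapses the average to the single rectangular Schur function $\schur_{\langle N^k\rangle}$ evaluated at the $2k$ points $e^{-\complexi\alpha_i},e^{\complexi\beta_j}$. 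Setting $r=0$ (all arguments equal) already recovers $(\mathcal{M})_N(2k,0)=\schur_{\langle N^k\rangle}\Ones{2k}$, and the stated Barnes--$G$ evaluation is then the hook--content formula applied to the $k\times N$ rectangle, telescoped into $G$-functions as in \cite{BumpGamburd,KS}; the asymptotics $\sim\fracG\,N^{k^2}$ is the classical Keating--Snaith leading term and justifies the normalization by $N^{k^2}$.

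Second, I would recognize $(\mathcal{M})_N(2k,r)$ as a specific derivative of this function at the point where all $2k$ arguments equal $1$: differentiating $r$ of the holomorphic factors once each brings down exactly $r$ logarithmic derivatives $Z_U'(0)/Z_U(0)$ (this is transparent for $r\le k$; the range $k<r\le 2k$ is reached by also differentiating antiholomorphic factors and using the phase identity $\overline{Z_U'(0)/Z_U(0)}=Z_U'(0)/Z_U(0)+\complexi N$, which converts those contributions back into the $(\mathcal{M})_N(2k,\cdot)$ through triangular $\complexi N$-corrections analogous to those in Lemma~\ref{firstLemma}). Writing each argument as $1+u$, the Generalized Binomial Theorem expands $\schur_{\langle N^k\rangle}(1+u)/\schur_{\langle N^k\rangle}\Ones{2k}$ as a sum over partitions $\mu$ of the shifted Schur value $\schur^*_\mu(\langle N^k\rangle)$ times $\schur_\mu(u)$ against an explicit normalizing constant. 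The fully-mixed $r$-th order derivative then annihilates every term except those with $|\mu|=r$, and on each survivor it returns the coefficient of the squarefree monomial $u_1\cdots u_r$ in $\schur_\mu$, namely the number of standard tableaux $r!/h_\mu$. This is precisely what produces the sum $\sum_{\mu\partition r}$ and the factor $r!/h_\mu$ in the statement.

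The technical heart---and the step I expect to be the main obstacle---is the explicit evaluation of the shifted Schur function at the rectangular shape $\langle N^k\rangle$ together with the Generalized Binomial Theorem normalization. Here I would use that $\schur^*_\mu(\langle N^k\rangle)$ factors over the cells of $\mu$ into linear factors in the contents, the two side lengths $N$ and $k$ of the rectangle contributing precisely $(N\uparrow\mu)$ and $((-k)\uparrow\mu)$, while the normalization $\schur_{\langle N^k\rangle}\Ones{2k}$ and the hook--content formula $\schur_\mu\Ones{2k}=\bigl((2k)\uparrow\mu\bigr)/h_\mu$ supply the denominator $(-2k)\uparrow\mu$ and a second factor $1/h_\mu$. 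Matching these against the combinatorial factor of the previous paragraph yields Equation~(\ref{finiteNformula}) exactly. Tracking signs---the arguments sit at $\pm1$, so contents enter with a definite orientation---is the delicate bookkeeping that distinguishes the $(-k)$ and $(-2k)$ occurring at finite $N$ from the $k$ and $2k$ of the limit, and this is where I would be most careful.

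Finally, the limit formula (\ref{limitFormula}) follows from (\ref{finiteNformula}) with almost no extra work. Since $(N\uparrow\mu)=\prod_{\square\in\mu}(N+c(\square))\sim N^{r}$, dividing by $N^{r}$ (the gap between the $N^{k^2+r}$ and $N^{k^2}$ normalizations) sends $(N\uparrow\mu)/N^{r}\to 1$ and leaves $\sum_{\mu\partition r}\frac{r!}{h_\mu^2}\frac{(-k)\uparrow\mu}{(-2k)\uparrow\mu}$. Using conjugation of partitions---which negates all contents, hence sends $(-x)\uparrow\mu$ to $(-1)^{r}\,(x\uparrow\mu')$ while fixing $h_\mu=h_{\mu'}$---the ratio $\frac{(-k)\uparrow\mu}{(-2k)\uparrow\mu}$ becomes $\frac{k\uparrow\mu'}{(2k)\uparrow\mu'}$, and reindexing the sum by $\mu'$ gives exactly the claimed $\sum_{\mu\partition r}\frac{r!}{h_\mu^2}\frac{k\uparrow\mu}{(2k)\uparrow\mu}$.
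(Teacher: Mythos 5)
Your two-step plan (Schur orthogonality to collapse the average, then the Okounkov--Olshanski Generalized Binomial Theorem to read off derivatives) is indeed the paper's plan; your treatment of the range $r\le k$ matches the paper's proof essentially step for step, and your deduction of (\ref{limitFormula}) from (\ref{finiteNformula}) via $(N\uparrow\mu)/N^r\to 1$ and conjugation of partitions is exactly right. The genuine gap is the range $k<r\le 2k$, which the theorem covers and your setup cannot reach directly. Your generating function $\average{\prod_{i=1}^k Z_U(\alpha_i)\prod_{j=1}^k\overline{Z_U(\beta_j)}}$ has only $k$ holomorphic slots, so pure first-order $\alpha$-derivatives stop at $r=k$. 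Worse, its Bump--Gamburd collapse is not just $\schur_{\left<N^k\right>}$ at $2k$ points: since $\overline{Z_U(\beta)}=(-1)^N e^{\complexi N\beta}\,\overline{\det U}\,Z_U(\beta)$, the collapsed expression carries an exponential prefactor $e^{\complexi N\sum_j \beta_j}$, which you dropped, and $\beta$-derivatives hit this prefactor as well as the Schur function. Your proposed repair via the (correct) identity $\overline{Z_U'(0)/Z_U(0)}=Z_U'(0)/Z_U(0)+\complexi N$ therefore requires applying Leibniz to the prefactor, inverting a triangular system in the $(\mathcal{M})_N(2k,\cdot)$, and then proving that the resulting double sum collapses to the single clean sum $\sum_{\mu\partition r}$ of (\ref{finiteNformula}); none of this is carried out, and it is a genuine computation, not a correction ``analogous to Lemma~\ref{firstLemma}''. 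The paper never faces this problem because of one identity your proposal is missing: $\overline{Z_U(0)}^k=(-1)^{kN}\,\overline{\schur_{\left<k^N\right>}(U)}\,Z_U(0)^k$, which trades the $k$ conjugate factors for a single \emph{fixed} character. Then $\left|Z_U(0)\right|^{2k}\left(Z_U'(0)/Z_U(0)\right)^r=(-1)^{kN}\overline{\schur_{\left<k^N\right>}(U)}\,Z_U(0)^{2k-r}Z_U'(0)^r$ involves $2k$ holomorphic factors with free arguments; expanding $Z_U(a_1)\cdots Z_U(a_{2k})$ by the dual Cauchy identity (\ref{Cauchy}) and averaging kills every term except $\lambda=\left<N^k\right>$, leaving one Schur function of $2k$ variables from which \emph{all} derivatives of order $r\le 2k$ are read off uniformly. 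That conversion is the missing idea.

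A second gap is the step you yourself call the technical heart: the evaluation of the shifted Schur function at the rectangle is asserted, not proved, and it cannot simply be cited, since it is precisely the paper's Lemma~\ref{elegant}, which the paper remarks seems to be new. You correctly guess its shape (linear factors in the contents governed by the two side lengths, giving $(N\uparrow\mu)$ and $((-k)\uparrow\mu)$ up to conjugating $\mu$), but the paper has to derive $\schur_\mu^*\Repeat{k}{N}=h_\mu\,\schur_{\mu^t}\Ones{N}\,\schur_\mu\Ones{k}$ from Okounkov--Olshanski's determinantal formula (11.28) together with their specialization (11.22), pulling the $N$-dependence out of the determinant row by row using relation (\ref{consistency}) and reassembling it as $\prod_{\square\in\mu}(N-c(\square))=N\uparrow\mu^t$. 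Without this lemma even your $r\le k$ argument is incomplete; with it, and with the determinant trick above, your outline becomes the paper's proof.
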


We now briefly discuss the technique used to obtain this theorem. The first idea will be similar to an idea of Bump and Gamburd \cite{BumpGamburd} of using orthonormality of Schur functions to efficiently compute matrix averages. The new idea here is to combine this with the Generalized Binomial Theorem (\ref{binomial}) of Okounkov and Olshanski \cite{OkOl1997} in order to obtain information about the moments of derivatives instead of moments of polynomials directly. 

This paper is structured as follows. 
We give in Section~\ref{definitions} the basic definitions needed. In Section~\ref{shiftedSec}, we explain the Generalized Binomial Theorem. We prove Theorem~\ref{sumPartitionsThm} in Section~\ref{proofSec}. We use this result to deduce in Section~\ref{propertiesSec} further properties of the rational functions obtained. We present in Section~\ref{hyperSec} an alternative interpretation of these results in terms of hypergeometric functions of a matrix argument. Finally, we announce briefly in Section~\ref{furtherSec} further results.

\section{Definitions}
\label{definitions}
Since Theorem~\ref{sumPartitionsThm} presents its result as a sum over partitions, we first need to define some classical objects associated to them. We follow conventions of \cite{StanleyBook} throughout.

Partitions are weakly decreasing sequences $\lambda_1 \ge \lambda_2 \ge \cdots \lambda_{l(\lambda)}$ of positive integers, its \emph{parts}. The integer $l=l(\lambda)$ is called the \emph{length} of the partition $\lambda$. We call the sum $\sum_i \lambda_i$ of its parts the \emph{size} $|\lambda|$ of the partition $\lambda$. We sometimes say that $\lambda$ partitions $|\lambda|$, which is written $\lambda \vdash |\lambda|$. If the partition has $k$ parts of equal size $N$, we simplify notation to $\left<N^k\right>$. 

We always prefer to think of partitions graphically. To each partition we associate a \emph{Ferrers diagram}, \textit{i.e.}~ the Young diagram of the partition presented in the English convention. We only give one example (Figure \ref{DiagramFig}) as it should be clear from it how the diagram is constructed: the parts $\lambda_i$ indicate how many standard boxes to consider on each row. 
\begin{figure}
\centering
\yng(9,6,2,1)
\caption{The Ferrers diagram of partition $(9,6,2,1)$.}
\label{DiagramFig}
\end{figure}

There exists an involution acting on partitions, which we denote by $\lambda^t$. Its action on diagrams amounts to a reflection along the main diagonal.

Partitions can be indexed in many different ways. Indeed, we have already seen that finite sequences of (decreasing) part sizes can be used as an index set. Shifted part lengths are essential for the work of Okounkov and Olshanski underlying Section \ref{shiftedSec}, but we do not need to define that system of coordinates. 

\begin{figure}
\begin{displaymath}
\young(\smalla \smallb \smallc \smalld \smalle \smallf \smallg \smallh \smalli,\moinsm\smalla\smallb\smallc\smalld\smalle,\moinsmm\moinsm,\moinsmmm)
\quad\quad\quad
\young({{12}}{{10}}8765321,864321,31,1)
\end{displaymath}
\caption{The contents and hook lengths of the partition $(9,6,2,1)$. Its hook number is thus 4\! 180\! 377\! 600.}
\label{contentHookFig}
\end{figure}

Define the \emph{content} of a box $\square$ located at position $(i,j)$ in a partition $\lambda$ as $ c(\square) = j-i $ (see Figure~\ref{contentHookFig}). We use this to define the symbol
\begin{eqnarray}
k \uparrow \mu &:=& \prod_{\square \in \mu} (k+c(\square)).
\end{eqnarray}
This definition leads to $k \uparrow (n) = k (k+1)\cdots(k+n-1)$ and $k \uparrow (1^n) = k (k-1) \cdots (k-n+1)$. We sometimes abbreviate the first $k\uparrow n$ and the second $k \downarrow n$. This is clearly a generalization of the Pochhammer symbol. Indeed, we even adopt the convention that $N \uparrow (-k) = 1/((N+1)\uparrow k)$, which guarantees 
\begin{eqnarray}
\label{consistency}
(N+a-1)\downarrow (a+b) &= &(N\uparrow a)\cdot ((N-1)\downarrow b).
\end{eqnarray} 

To get back to the generalization to partitions, we have an immediate relation under conjugation:
\begin{eqnarray}
\label{conjugation}
k\uparrow \mu^t &=& (-1)^{|\mu|} ((-k) \uparrow \mu).
\end{eqnarray}

\begin{comment}
Also, for large enough $R$, we can reorganize the product in the initial definition along rows and obtain
\begin{eqnarray}
k \uparrow \mu &=& \prod_{1 \le i \le R} \frac{(\mu_i+n-i)!}{(n-i)!}.
\end{eqnarray}
This allows for extensions to $\mu \in \mathbb{C}^\infty$ (or at least subsets where this converges) using the $\Gamma$-function.
\end{comment}

\begin{comment}
We also have 
\begin{eqnarray}
h_\mu = \prod_{\alpha \in \mu} h_\alpha = \prod_{1 \le i < j \le R} \frac{(\mu_i -\mu_j+j-i)!}{(j-i)!},
\end{eqnarray}
which similarly allows for analytic extension to ``continuous partitions'' (\textit{i.e.}~the parts are continuous), or rather cones.
Both functions $X \uparrow \mu $ and $H(\mu)$  are symmetric in the variables $\mu_i -i$. 

It is also useful to observe that 
\begin{eqnarray}
\dim \mu &=& \frac{|\mu|!}{H(\mu)}
\label{dimension}
\end{eqnarray}
and
\begin{eqnarray}
\label{evaluationSchur}
\schur_\mu\left(\left[1^N\right]\right) &=& \frac{N \uparrow \mu}{H(\mu)}.
\end{eqnarray}
\end{comment}

Given a box $\square \in \lambda$, define its hook (set)
\begin{eqnarray}
h_\square = h_{(i,j)} &:=& \left\{ (i,j')\in \lambda:j'\ge j \right\} \cup \left\{ (i',j) \in \lambda:i'\ge i \right\}.
\end{eqnarray}
Remark that the box $\square$ itself is in its hook. Define the \emph{hook length} $|h_{(i,j)}|$ as the cardinality of the hook (see Figure~\ref{contentHookFig}) and call their product the \emph{hook number} of a partition $\lambda$:
\begin{eqnarray}
h_\lambda &:=& \prod_{\square \in \lambda} h_\square.
\end{eqnarray}

Hook numbers are of importance thanks to the hook length formula of Frame, Robinson and Thrall \cite{FRT}. This counts the number $f_\lambda$ of standard tableaux of shape $\lambda$, which is also the dimension $\dim \chi^\lambda = \chi^\lambda(1)$ of the character  associated to the partition $\lambda$ for the symmetric group  $\mathcal{S}_{|\lambda|}$ (see \cite{Sagan,StanleyBook}):
\begin{eqnarray}
\label{dimensionsFormula}
f_\lambda&:=& \dim \chi_{\mathcal{S}_{|\lambda|}}^\lambda= \frac{|\lambda|!}{h_\lambda}.
\end{eqnarray}

The last classical combinatorial object we need is the \emph{Schur functions}. To each partition $\lambda$ we associate an element $\schur_\lambda$ of degree $|\lambda|$ of the ring $\Lambda_{\mathbb{Q}}[X]$ of polynomials symmetric in a countable set of variables $X=\left\{ x_i \right\}$. We refer the reader to \cite{BumpLieGroups} for definitions, and only state a few properties.

Let $r\in \naturals$, and take a partition $\lambda$ of size $r$. One can define a map from $\chi^\lambda_N$ from $\Unit{N}$ to $\mathbb{C}$ in the following way:
\begin{eqnarray}
\label{eigenvalues}
\chi^\lambda_N(g):= \schur_\lambda(g) := \schur_\lambda(e^{\mathfrak{i} \theta_1},\cdots,e^{\mathfrak{i} \theta_N},0,0,0,\cdots),
\end{eqnarray}
with the $e^{\mathfrak{i} \theta_j}$ the eigenvalues of $g \in \Unit{N}$. When $l(\lambda)>N$, $\chi^\lambda_N(g)\equiv 0$, but once $N \ge l(\lambda)$, the $\chi^\lambda_N(g)$ become (different) irreducible characters of $\Unit{N}$. We thus have the formula
\begin{eqnarray}
\average{\chi^\lambda_N,\chi^\mu_N} =
\average{\schur_\lambda(\cdot),\schur_\mu(\cdot)}= 
\left\{
\begin{array}{cl}
1 & \text{ if $\lambda = \mu$ and $N\ge |\lambda|$},\\
0 & \text{ otherwise.}
\end{array}
\right.
\end{eqnarray}
The last formula we need concerns the evaluation of Schur polynomials, at repeated values of the arguments.  Denote by $\{a\}^R$ the multiset consisting of the union of $R$ copies of $a$ and countably many copies of $0$. The hook-content formula (see \cite{StanleyBook,BumpLieGroups}), a consequence of the Weyl Dimension Formula, states then that
\begin{eqnarray}
\label{hookcontentFormula}
\schur_\lambda \Repeat{k}{1} &=&\frac{k \uparrow \lambda}{h_\lambda}.
\end{eqnarray}
\section{Shifted Schur Functions and Generalized Binomial Theorem}
\label{shiftedSec}
The Generalized Binomial Theorem as formulated in \cite[Theorem 5.1]{OkOl1997} can be interpreted as a Taylor expansion of the character $\chi^\lambda_n=\schur_\lambda(\cdot)$ of $\Unit{n}$ around the identity $\id_{n \times n}$.  It says explicitly that
\begin{eqnarray}
\label{binomial}
\frac{\schur_\lambda(1+x_1,\cdots,1+x_n)}{\schur_\lambda\Ones{n}} & = &
\sum_{\substack{\mu\\l(\mu)\le n }} \frac{\schur_\mu^*(\lambda_1,\cdots,\lambda_n)\schur_\mu(x_1,\cdots,x_n)}{n \uparrow \mu},
\end{eqnarray}
where the $\schur_\mu^*$ are shifted Schur functions. Those were introduced by Okounkov and Olshanski in \cite{OkOl1997} and need not be defined here as we only need their values on a very limited set of arguments.
This is given by the following lemma, which is very elegant and seems to be new. Note the (almost-)symmetry between $k$ and $N$.
\begin{lemma}
\label{elegant}
\begin{eqnarray}
\schur_\mu^*\Repeat{k}{N} &=& h_\mu \cdot  \schur_{\mu^t}\Ones{N} \cdot \schur_\mu\Ones{k} \\
&=&(-1)^{|\mu|} \frac{((-N) \uparrow \mu)(k\uparrow \mu)}{h_\mu}
\end{eqnarray}
\end{lemma}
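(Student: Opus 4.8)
The plan is to feed the specific partition $\lambda=\left<N^k\right>$ into the Generalized Binomial Theorem~(\ref{binomial}), with the number of variables $n$ chosen equal to the number of rows $k$, and then to read off $\schur_\mu^*\Repeat{k}{N}$ as a Taylor coefficient. The reason for this choice is that a rectangular Schur polynomial in exactly as many variables as it has rows collapses to a monomial: $\schur_{\left<N^k\right>}(x_1,\dots,x_k)=(x_1\cdots x_k)^N$, since the only semistandard filling of a $k\times N$ rectangle with entries in $\{1,\dots,k\}$ has constant columns $(1,2,\dots,k)$. In particular $\schur_{\left<N^k\right>}\Ones{k}=1$ and $\schur_{\left<N^k\right>}(1+x_1,\dots,1+x_k)=\prod_{i=1}^k(1+x_i)^N$. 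With $\lambda=\left<N^k\right>$ and $n=k$, the relevant parts $(\lambda_1,\dots,\lambda_k)$ are all equal to $N$, so the left-hand arguments of the shifted Schur functions are precisely $\Repeat{k}{N}$, and (\ref{binomial}) becomes
\begin{eqnarray}
\prod_{i=1}^k(1+x_i)^N &=& \sum_{\mu:\,l(\mu)\le k}\frac{\schur_\mu^*\Repeat{k}{N}}{k\uparrow\mu}\,\schur_\mu(x_1,\dots,x_k).
\end{eqnarray}

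Next I would expand the left-hand side independently in the Schur basis using the dual Cauchy identity $\prod_{i=1}^k\prod_{a=1}^N(1+x_iy_a)=\sum_\mu\schur_\mu(x)\schur_{\mu^t}(y)$, specialized at $y_1=\cdots=y_N=1$, which gives $\prod_{i=1}^k(1+x_i)^N=\sum_\mu\schur_{\mu^t}\Ones{N}\,\schur_\mu(x_1,\dots,x_k)$. Since the $\schur_\mu(x_1,\dots,x_k)$ with $l(\mu)\le k$ are linearly independent, comparing coefficients in the two expansions yields
\begin{eqnarray}
\schur_\mu^*\Repeat{k}{N} &=& (k\uparrow\mu)\,\schur_{\mu^t}\Ones{N}.
\end{eqnarray}
The hook-content formula~(\ref{hookcontentFormula}) rewrites $k\uparrow\mu=h_\mu\,\schur_\mu\Ones{k}$, which is exactly the first claimed equality; the terms with $l(\mu)>k$ hold trivially, both sides vanishing because $\schur_\mu\Ones{k}=0$ there.

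For the second equality I would simply substitute the hook-content values $\schur_{\mu^t}\Ones{N}=(N\uparrow\mu^t)/h_{\mu^t}$ and $\schur_\mu\Ones{k}=(k\uparrow\mu)/h_\mu$, use that transposition preserves the multiset of hook lengths so that $h_{\mu^t}=h_\mu$, and finally apply the conjugation relation~(\ref{conjugation}) in the form $N\uparrow\mu^t=(-1)^{|\mu|}((-N)\uparrow\mu)$.

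I expect the only genuinely delicate point to be the very first step: recognizing that specializing the Generalized Binomial Theorem to a rectangle with the number of variables tuned to the number of rows is precisely what linearizes the left-hand side into $\prod_i(1+x_i)^N$. Once that choice is made, everything reduces to matching coefficients against the dual Cauchy identity and to routine hook-content and conjugation bookkeeping; the $k\leftrightarrow N$ near-symmetry the authors highlight then becomes visible as the interchange of the two evaluations $\schur_\mu\Ones{k}$ and $\schur_{\mu^t}\Ones{N}$.
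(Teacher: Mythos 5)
Your proof is correct, but it takes a genuinely different route from the paper. The paper never touches the Generalized Binomial Theorem inside this proof: it computes $\schur_\mu^*\Repeat{k}{N}$ head-on from two facts in Okounkov--Olshanski, namely the determinantal (Jacobi--Trudi-type) formula $\schur_\mu^* = \det[\mathfrak{h}^*_{\mu_i-i+j}(\cdot)]$ and the evaluation $\mathfrak{h}^*_r\Repeat{k}{N}=(N\downarrow r)\cdot\mathfrak{h}_r\Ones{k}$, then extracts row and column Pochhammer factors via Equation~(\ref{consistency}), recognizes the remaining determinant as $\schur_\mu\Ones{k}$ by Jacobi--Trudi, and finishes with the hook-content formula. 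You instead invert the Binomial Theorem~(\ref{binomial}) itself: specializing it at the rectangle $\lambda=\left<N^k\right>$ with exactly $n=k$ variables collapses the left side to $\prod_i(1+x_i)^N$, which you re-expand by the dual Cauchy identity~(\ref{Cauchy}) at $y_a=1$, and linear independence of the $\schur_\mu(x_1,\dots,x_k)$ forces $\schur_\mu^*\Repeat{k}{N}=(k\uparrow\mu)\,\schur_{\mu^t}\Ones{N}$; hook-content and the conjugation rule~(\ref{conjugation}) do the rest, exactly as in the paper's last steps. Your argument is arguably more self-contained relative to this paper, since every ingredient (Equations~(\ref{binomial}), (\ref{Cauchy}), (\ref{hookcontentFormula}), (\ref{conjugation})) is already quoted in the text, whereas the paper must import two further identities from Okounkov--Olshanski; it also makes the $k\leftrightarrow N$ near-symmetry conceptually transparent. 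What the paper's computation buys in exchange is uniformity: the determinant formula handles every $\mu$ at once, while your coefficient comparison only reaches $l(\mu)\le k$.

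That leaves one small imprecision on your side, though not a genuine gap: for $l(\mu)>k$ you say both sides vanish ``because $\schur_\mu\Ones{k}=0$,'' but that only disposes of the right-hand side. The vanishing of the left-hand side $\schur_\mu^*\Repeat{k}{N}$ requires a property of shifted Schur functions -- either the convention that $\schur_\mu^*$ in $k$ variables is zero when $l(\mu)>k$ together with stability under appending zero arguments, or the Okounkov--Olshanski Vanishing Theorem ($\schur_\mu^*(\lambda)=0$ unless $\mu\subseteq\lambda$, applied to $\lambda=\left<N^k\right>$). This case is genuinely needed, since the lemma is invoked in the proof of Theorem~\ref{sumPartitionsThm} for all $\mu\partition r$ with $r$ up to $2k$, so partitions of length between $k+1$ and $2k$ do occur; add the one-line citation and your proof is complete.
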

\begin{proof}
We will need two equations from \cite{OkOl1997}. Equation~(11.28) tells us that
\begin{eqnarray}
\schur_\mu^*(x_1,\cdots,x_n) &=& 
\det \left[\mathfrak{h}^*_{\mu_i-i+j}(x_1+j-1,\cdots,x_n+j-1) \right]_{i,j=1}^{R,R}
\end{eqnarray}
for large enough $R$ (it is then stable in $R$). Equation~(11.22) from \cite{OkOl1997} deals precisely with those $\mathfrak{h}^*$:
\begin{eqnarray}
\mathfrak{h}^*_r\Repeat{k}{N} &=& (N\downarrow r)\cdot\mathfrak{h}_r\Ones{k}.
\end{eqnarray}
Combining these equations, we obtain
\begin{eqnarray}
\schur_\mu^* \Repeat{k}{N} &=& 
\det \left[((N+j-1)\downarrow (\mu_i-i+j))  \cdot \mathfrak{h}^*_{\mu_i-i+j}\Ones{k} \right]_{i,j=1}^{R,R}\\
&=&\det \left[(N\uparrow j) ((N-1)\downarrow (\mu_i-i))  \cdot \mathfrak{h}^*_{\mu_i-i+j}\Ones{k} \right]_{i,j=1}^{R,R}\\
&=& \left(\prod_{i=1}^R ((N-1)\downarrow (\mu_i-i)) \right) \left(\prod_{j=1}^R (N\uparrow j) \right) \det \left[\mathfrak{h}^*_{\mu_i-i+j}\Ones{k}\right]_{i,j=1}^{R,R}\\
&=& \left(\prod_{i=1}^R ((N+i-1)\downarrow \mu_i) \right) \schur_\mu\Ones{k}\\
&=& \left(\prod_{\square \in \mu} N-c(\square) \right)\schur_\mu\Ones{k}\\
&=& h_\mu \schur_{\mu^t}\Ones{N}\schur_\mu\Ones{k}.
\end{eqnarray}
The fourth line follows from Equation~(\ref{consistency}), the fifth from reorganizing a product over rows into a product over boxes, and the sixth from Equation~(\ref{hookcontentFormula}).
\end{proof}

We are now ready to launch into the proof of Theorem~\ref{sumPartitionsThm}. 

\section{Proof of Theorem~\ref{sumPartitionsThm}}
\label{proofSec}
The method of proof will be very similar to the technique presented in \cite{BumpGamburd}. In particular, both the Cauchy identity \cite{BumpLieGroups}
\begin{eqnarray}
\label{Cauchy}
\prod_{i,j}{1+x_iy_j} &=& \sum_\lambda \schur_{\lambda^t}(x_i)
\schur_\lambda(y_j),
\end{eqnarray}
where $x_i$ and $y_j$ are finite sets of variables, and the asymptotic orthonormality of the Schur functions will again play a crucial role. However, the power of their technique is now supplemented by the Generalized Binomial Theorem, which will provide  for a dramatic simplification of the arguments and results in \cite{DehayeJoint}.

\begin{proof}[of Theorem~\ref{sumPartitionsThm}]
We have
\begin{eqnarray}
\overline{Z_U(0)}  &=&  \prod_{j=1}^N \left(1- e^{-\complexi \theta_j}\right)\\
&=&\prod_{j=1}^N -e^{-\complexi \theta_j} \left(1- e^{\complexi \theta_j}\right)\\
&=& (-1)^N \overline{\det U} Z_U(0)
\end{eqnarray}
and thus
\begin{eqnarray}
\overline{Z_U(0)}^k & = &  (-1)^{kN} \overline{\det U}^k Z_U(0)^k\\
&=& (-1)^{kN} \overline{\schur_{\left<k^N\right>}(U)} Z_U(0)^k.
\end{eqnarray}
We use the Cauchy Identity from Equation~(\ref{Cauchy}) to obtain
\begin{eqnarray}
Z_U(a_1)\cdots Z_U(a_r) & = &\sum_\lambda \schur_{\lambda^t}(U) \schur_\lambda\left(-e^{-\complexi a_1},\cdots,-e^{-\complexi a_r}\right)\\
& = &\sum_\lambda (-1)^{|\lambda|} \schur_{\lambda^t}(U) \schur_\lambda\left(e^{-\complexi a_1},\cdots,e^{-\complexi a_r}\right).
\end{eqnarray}
To the first order in small $a$, we have $ e^{-\complexi a} \approx 1- \complexi a$, so
\begin{eqnarray}
Z_U'(0)^r & = & \sum_\lambda (-1)^{|\lambda|} \schur_{\lambda^t}(U) \,\, \partial_{1}\cdots \partial_{r} \bigl. \schur_\lambda\left(1-\complexi a_1,\cdots,1-\complexi a_r\right)\bigr|_{a_1=\cdots=a_r=0},
\end{eqnarray}
where $\partial_i := \partial_{a_i}$.

Putting everything together, we obtain
\begin{multline}
\left|Z_U(0)\right|^{2k} \left( \frac{Z_U'(0)}{Z_U(0)} \right)^{r}=
(-1)^{(kN)} \overline{\schur_{\left<k^N\right>}(U)} \cdot\hfill \\ \sum_\lambda (-1)^{|\lambda|} \schur_{\lambda^t}(U) \,\, \partial_{1}\cdots \partial_{r} \bigl. \schur_\lambda\left((2k-r) \times \left\{1\right\} \cup \left\{1-\complexi a_1,\cdots,1-\complexi a_r\right\}\right)\bigr|_{a_1=\cdots=a_r=0}.
\end{multline}
Just as in the original proof of Bump and Gamburd, orthogonality of the Schur polynomials kills all terms in the sum but one (where $\lambda^t$ equals $  \left<k^N\right>$) under averaging over $\Unit{N}$. Hence this simplifies to
\begin{multline}
\average{\left|Z_U(0)\right|^{2k} \left( \frac{Z_U'(0)}{Z_U(0)} \right)^{r}}
=\\
\partial_{1}\cdots \partial_{r} \bigl. \schur_{\left<N^k\right>}\left((2k-r) \times \left\{1\right\} \cup \left\{1-\complexi a_1,\cdots,1-\complexi a_r\right\}\right)\bigr|_{a_1=\cdots=a_r=0}.
\end{multline}
This is the perfect opportunity to apply the Generalized Binomial Theorem.
 We wish to set $n=2k$, and 
\begin{eqnarray}
x_i &=& \left\{ \begin{array}{ccl} -\complexi a_i &\text{ for }& 1\le i \le r\\ 0 &\text{ for } &r+1 \le i \le 2k\end{array}\right.
\end{eqnarray}
in Equation~(\ref{binomial}) to get
\begin{multline}
\average{\left|Z_U(0)\right|^{2k} \left( \frac{Z_U'(0)}{Z_U(0)} \right)^{r}}= \schur_{\left<N^k\right>}\Ones{2k}\times\\
 (-\complexi)^r \sum_{\mu\partition r} \frac{\schur_\mu^*\Repeat{k}{N}\,\,\partial_1\cdots\partial_r\bigl.\schur_\mu(a_1,\cdots,a_r)\bigr|_{a_1=\cdots=a_r=0}}{(2k) \uparrow \mu}.
\end{multline}
The (additional) restriction on $\mu$ is obtained because of the derivatives: since the Schur functions are evaluated at $a_i=0$, and $\schur_\mu$ is of total degree $|\mu|$ in the $a_i$, we must have $|\mu|= r$ for something to survive $\partial_1 \cdots \partial_r$. 

When $\mu \partition r$, we have\footnote{Observe that higher derivatives require knowledge of values of symmetric group characters at other group elements than the identity.} 
\begin{eqnarray} \partial_1\cdots\partial_r\bigl.\schur_\mu(a_1,\cdots,a_r)\bigr|_{a_1=\cdots=a_r=0} &=&  \left< \schur_\mu, \mathfrak{p}_{\left<1^r\right>}\right>
= \dim \chi^\mu_{\mathcal{S}_{|\mu|}}
= \frac{|\mu|}{h_\mu}
\end{eqnarray}
since derivation and multiplication by power sums are adjoint.

Combined with Lemma~(\ref{elegant}), Equation~(\ref{conjugation}) and Equation~(\ref{hookcontentFormula}), this gives Equation~(\ref{finiteNformula}), which then quickly implies (\ref{limitFormula}).
\end{proof}

\section{Properties of the Rational Functions}
\label{propertiesSec}
In the RHS of formulas (\ref{finiteNformula}) and (\ref{limitFormula}), we have a sum of rational multiples of ratios of polynomials in $k$ (and $N$), hence rational functions of $k$. We will now explain some of the properties of these functions, which are easily deduced from Equation~ (\ref{limitFormula}) and the limiting version of Lemma~\ref{firstLemma}.
\begin{prop}
For a fixed $r,h \in \naturals$, there exists sequences of even polynomials $X_r, Y_r$ and $\tilde{X}_{2h}$ such that for all $k\ge r,2h$,
\begin{eqnarray}
\frac{(\mathcal{M})(2k,r)}{(\mathcal{M})(2k,0)} &=&  \left( -\frac{\complexi}{2}\right)^r\frac{X_r(2k)}{Y_r(2k)},\\
\frac{\left|\mathcal{V}\right|(2k,2h)}{\left|\mathcal{V}\right|(2k,0)} &=&  \frac{\tilde{X}_{2h}(2k)}{Y_{2h}(2k)},
\end{eqnarray}
and such that $X_r$ and $Y_r$ are of the same degree, monic, and with integer coefficients. We also have $\deg \tilde{X}_{2h} \le \deg Y_{2h}$ and that $\tilde{X}_{2h}$ has integer coefficients.
\end{prop}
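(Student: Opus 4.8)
The plan is to read the $(\mathcal{M})$ statement directly off the explicit formula (\ref{limitFormula}) after the substitution $m=2k$, and then to obtain the $|\mathcal{V}|$ statement as a formal corollary via the limiting form of Lemma~\ref{firstLemma}. First I would rewrite the right-hand side of (\ref{limitFormula}): using $k+c(\square)=\tfrac12(m+2c(\square))$ and $2k+c(\square)=m+c(\square)$, each box contributes a factor $\tfrac12$, so that
\[
\frac{(\mathcal{M})(2k,r)}{(\mathcal{M})(2k,0)}=\left(-\frac{\complexi}{2}\right)^r R(m),\qquad R(m):=\sum_{\mu\partition r}\frac{r!}{h_\mu^2}\prod_{\square\in\mu}\frac{m+2c(\square)}{m+c(\square)},
\]
where the sign comes from $\complexi^{-r}=(-\complexi)^r$. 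It then suffices to exhibit $R(m)=X_r(m)/Y_r(m)$ with $X_r,Y_r$ even, monic, of equal degree, and integral.

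For evenness I would use the conjugation involution. Under $m\mapsto -m$ a single factor becomes $\frac{-m+2c}{-m+c}=\frac{m-2c}{m-c}$; by (\ref{conjugation}) the multiset of contents of $\mu^t$ is the negation of that of $\mu$, and $h_{\mu^t}=h_\mu$, so the $\mu$-summand is carried to the $\mu^t$-summand. Since $\mu\mapsto\mu^t$ permutes the partitions of $r$, this gives $R(-m)=R(m)$, so $R$ is an even rational function and $X_r,Y_r$ may be taken even. For the degrees, every product tends to $1$ as $m\to\infty$, whence $R(\infty)=\sum_{\mu\partition r} r!/h_\mu^2=\sum_{\mu}f_\mu^2/r!=1$ by the hook-length formula (\ref{dimensionsFormula}) together with $\sum_{\mu\partition r}f_\mu^2=r!$. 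Thus $\deg X_r=\deg Y_r$ with equal leading coefficients, and normalizing $Y_r$ to be monic forces $X_r$ monic as well.

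Integrality is where I expect the real work. The poles of $R$ lie at $m=-c(\square)$, all integers, so in lowest terms $Y_r$ is a monic polynomial with integer roots and is therefore automatically in $\mathbb{Z}[m]$ (and even, the poles occurring in $\pm$ pairs). For the numerator, write $R=N/L$ over the monic integral common denominator $L$, the least common multiple of the denominators $\prod_{\square\in\mu}(m+c(\square))$; since $Y_r\mid L$ with monic integral quotient, Gauss's lemma reduces the integrality of $X_r$ to showing $N\in\mathbb{Z}[m]$. Rewriting $r!/h_\mu^2=f_\mu^2/r!$, this amounts to proving that the manifestly integral polynomial $\sum_{\mu\partition r} f_\mu^2\,\prod_{\square}(m+2c(\square))\cdot L(m)/\prod_{\square}(m+c(\square))$ is divisible coefficientwise by $r!$. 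This divisibility is the main obstacle; I would attack it through the column orthogonality of the symmetric-group characters that already produces the value $\sum_\mu f_\mu^2=r!$ at infinity, now applied uniformly in $m$.

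Finally, for the $|\mathcal{V}|$ statement, dividing Lemma~\ref{firstLemma} by $N^{k^2+2h}$ and letting $N\to\infty$ yields the limiting relation $|\mathcal{V}|(2k,2h)=\sum_{i=0}^{2h}\binom{2h}{i}(\complexi/2)^{2h-i}(\mathcal{M})(2k,i)$. Dividing by $|\mathcal{V}|(2k,0)=(\mathcal{M})(2k,0)$ and inserting the first formula gives
\[
\frac{|\mathcal{V}|(2k,2h)}{|\mathcal{V}|(2k,0)}=\left(\frac{\complexi}{2}\right)^{2h}\sum_{i=0}^{2h}\binom{2h}{i}(-1)^i\,\frac{X_i(m)}{Y_i(m)},
\]
using $(\complexi/2)^{2h-i}(-\complexi/2)^i=(\complexi/2)^{2h}(-1)^i$. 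Placing this over the common denominator $Y_{2h}$ (which needs $Y_i\mid Y_{2h}$ for $i\le 2h$, itself a consequence of the pole set growing with $r$) presents the ratio as $\tilde X_{2h}(m)/Y_{2h}(m)$ with $Y_{2h}$ even and monic. The degree bound is clean: as $m\to\infty$ each $X_i/Y_i\to 1$, so the bracketed sum tends to $\sum_{i=0}^{2h}\binom{2h}{i}(-1)^i=(1-1)^{2h}=0$, and the rational function vanishes at infinity, giving $\deg\tilde X_{2h}\le\deg Y_{2h}$. The genuinely delicate point here is the integrality of $\tilde X_{2h}$: it requires checking that the powers of $2$ introduced by $(\complexi/2)^{2h}$, together with the binomial coefficients and the quotients $Y_{2h}/Y_i$, combine to clear the denominator, and this $2$-adic bookkeeping is where I would expect to spend the most care.
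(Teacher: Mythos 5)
Your reduction of Equation~(\ref{limitFormula}) to the rational function $R(m)=\sum_{\mu\vdash r}\frac{r!}{h_\mu^2}\prod_{\square\in\mu}\frac{m+2c(\square)}{m+c(\square)}$ with prefactor $\left(-\complexi/2\right)^r$, the evenness argument via the conjugation involution (\ref{conjugation}), the equal-degree/monicity argument via the Plancherel identity $\sum_{\mu\vdash r}r!/h_\mu^2=1$, and the derivation of the $\left|\mathcal{V}\right|$ statement from the $N\to\infty$ limit of Lemma~\ref{firstLemma} (including the degree drop coming from $\sum_{i}\binom{2h}{i}(-1)^i=0$) all coincide with the route the paper takes in the remarks surrounding this Proposition.

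The genuine gap is exactly where you flag "the real work": the integrality of $X_r$ (your divisibility-by-$r!$ claim) and of $\tilde{X}_{2h}$ (your $2$-adic bookkeeping) are identified as obstacles but never actually proved, and these are substantive claims of the Proposition. Note also that your planned attack is unlikely to go through orbit-by-orbit or term-by-term: for $r=3$ the orbit $\{(3),(1^3)\}$ contributes $\frac16\left(\frac{m+4}{m+1}+\frac{m-4}{m-1}\right)=\frac{m^2-4}{3(m^2-1)}$, which is \emph{not} integral over its monic denominator; only after adding the self-conjugate orbit $\{(2,1)\}$, which contributes $\frac{2(m^2-4)}{3(m^2-1)}$, does the spurious $3$ cancel. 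So the divisibility you need is a global phenomenon across orbits, and nothing in the sketched "column orthogonality applied uniformly in $m$" accounts for it. The paper does not close this gap by your head-on route either: it deliberately does \emph{not} take $X_r/Y_r$ in lowest terms, but instead imports from \cite{DehayeJoint} an explicit expression for $Y_r$ --- the common denominator of the $1$-or-$2$-term subsums over orbits of the involution, \emph{after} the cancellations of type $\frac{k+i}{2k+2i}=\frac12$ inside $\frac{k\uparrow\mu}{(2k)\uparrow\mu}$ --- whose zeroes are exactly the odd integers between $1-r$ and $r-1$, and the integrality statements are inherited from the explicit formulas established there. The same issue affects your $\left|\mathcal{V}\right|$ argument: the divisibility $Y_i\mid Y_{2h}$ requires control of pole \emph{multiplicities}, not merely of pole sets, and that control again comes from the explicit form of $Y_r$ rather than from the heuristic that pole sets grow with $r$.
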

This Proposition leads or relies on a few easy facts.
\paragraph{Remarks.}
\begin{itemize}
\item All polynomials are even due to Equation~(\ref{conjugation}).
\item We emphasize that we are not quite taking the simplest possible form of $X_r(2k)$, $\tilde{X}_r(2k)$ and $Y_r(2k)$ here. For some (presumably small) $r$s, some spurious cancellations \emph{will} occur. However, an explicit expression, given in \cite{DehayeJoint}, can be obtained for the $Y_r$ such that all the previous statements are true. Let us just say that we are taking for $Y_r$ the common denominator not of every term in the sum over partitions $\lambda$, but of every (1-or-2-terms-)subsum over orbits of the involution on partitions, \emph{after} all the simplifications of the type  $\frac{k+i}{2k+2i}=\frac{1}{2}$ that occur in $\frac{k\uparrow \lambda}{(2k)\uparrow \lambda}$.
\item With this convention, the zeroes of $Y_r$ are exactly at the odd integers between $1-r$ and $r-1$.
\item Since the squares of dimension of characters of a finite group $G$ sum to the order of $G$, or due to the existence of the Robinson-Schensted-Knuth correspondence between permutations of $n$ and pairs of Young tableaux of the same shape partitioning $n$, we have starting from Equation~(\ref{dimensionsFormula}) the identity
\begin{eqnarray}
\sum_{\lambda \vdash n} \frac{n!}{h_\lambda^2} = 1 ,
\end{eqnarray}
which defines the \emph{Plancherel measure} on partitions of $n$. The polynomial $X_r$ is monic thanks to this last identity. 
\item The sum appearing in Equation~(\ref{limitFormula}) is a special case of a problem studied by Jonathan Novak in Equation~(9.21) of his thesis \cite{Novak}.
\end{itemize}

\section{Hypergeometric Functions of Matrix Arguments}
\label{hyperSec}
For completeness, we now discuss an alternative way to approach the expression in (\ref{finiteNformula}). Hypergeometric functions of a $N \times N$ matrix argument $M$ are generalizations of hypergeometric functions of a complex variable. Originally, this extension is defined on multisets of complex numbers, which can then be seen as eigenvalues of a matrix using the trick of Equation~(\ref{eigenvalues}).  There is extensive literature on those functions, most of it tied to multivariate statistical analysis. For a recent, accessible presentation, consider \cite{importanceSelberg}.

For $M$ a $N\times N$ matrix, we define \cite{RamanujanMaster,GrRi1991} a function of $M$ as follows:
\begin{eqnarray}
\label{pFq}
_pF_q(a_i,b_j;M) &=& \sum_\lambda \frac{\prod_{i=1}^p a_i\uparrow \lambda}{ \prod_{j=1}^q b_j \uparrow \lambda}\cdot \frac{\schur_\lambda(M)}{h_\lambda}
\end{eqnarray}
where $\schur_\lambda(M)$ is the Schur polynomial evaluated at the eigenvalues of $M$. This generalizes the classical hypergeometric functions of a complex variable $z$ to (the multiset of eigenvalues of) a square matrix variable $M$, via the following substitutions:
\begin{itemize}
\item the sum over integers is replaced by a sum over partitions;
\item the generalized Pochhammer symbol replaces the rising factorial;
\item the extra factorial that is always introduced for classical hypergeometric functions (by convention then) is replaced by a hook number;
\item powers of $z$ are replaced by Schur functions of the eigenvalues of $M$.
\item They admit integral representations, closely related to Selberg integrals (see \cite{ka1993}).
\end{itemize}

By forming an exponential generating series of Equation~(\ref{finiteNformula}) and with the help of Equation~(\ref{hookcontentFormula}), we are able to obtain the (confluent) hypergeometric function
\begin{eqnarray}
\sum_{r \ge 0} \frac{(\mathcal{M})_N(2k,r)}{(\mathcal{M})_N(2k,0)}\frac{(\complexi z)^r}{r!} &=&  
  \sum_{\mu}\frac{1}{h_\mu^2} \frac{(N\uparrow \mu) ((-k) \uparrow \mu)  }{(-2k) \uparrow \mu} z^{|\lambda|}=\,_1F_1(-k;-2k;z \id_{N \times N}).
\end{eqnarray}
We can then substitute for the RHS of this last equation many different expressions: the theory of hypergeometric functions of matrix arguments also involves integral expressions, differential equations and recurrence relations. However, since we are interested in asymptotics of these expressions for large $N$, for which little theory is developed, this is unfortunately of no real use at the moment. Note though that the hypergeometric function that appears is special, as it is only evaluated at scalar matrices. In that special case, $N\times N$ determinantal formulas have been developed \cite{scalar} (but not in the confluent case).

\section{Further Work}
\label{furtherSec}
The occurrence of the Plancherel measure is not a coincidence. In fact, much can be derived from this, and this will be the basis of further work: we will prove in a subsequent paper that the leading term of $\tilde{X}_{2h}$ has coefficient $\frac{2h!}{h!2^{3h}}$ and is of degree $2h$ lower  than $X_{2h}$. 

\acknowledgements
\label{sec:ack}
The author wishes to acknowledge helpful discussions with Daniel Bump, Richard Hall and Chris Hughes, as well as Alexei Borodin, whose advice was crucial at some stages. 
\bibliographystyle{alpha}
% use the following instead if you encounter problems 
%\bibliographystyle{alpha}
\bibliography{fpsac.bib}

\begin{thebibliography}{HKO00}

\bibitem[BG06]{BumpGamburd}
D.~Bump and A.~Gamburd.
\newblock On the averages of characteristic polynomials from classical groups.
\newblock {\em Comm. Math. Phys.}, 265(1):227--274, 2006.

\bibitem[Bum04]{BumpLieGroups}
D.~Bump.
\newblock {\em Lie groups}, volume 225 of {\em Graduate Texts in Mathematics}.
\newblock Springer-Verlag, New York, 2004.

\bibitem[CRS06]{CRS}
J.~B. Conrey, M.~O. Rubinstein, and N.~C. Snaith.
\newblock Moments of the derivative of characteristic polynomials with an
  application to the {R}iemann zeta function.
\newblock {\em Comm. Math. Phys.}, 267(3):611--629, 2006.

\bibitem[Deh08]{DehayeJoint}
P.-O. Dehaye.
\newblock Joint moments of derivatives of characteristic polynomials.
\newblock {\em Algebra \& Number Theory}, 2(1):31--68, 2008.

\bibitem[DGR96]{RamanujanMaster}
H.~Ding, K.~I. Gross, and D.~St.~P. Richards.
\newblock Ramanujan's master theorem for symmetric cones.
\newblock {\em Pacific J. Math.}, 175(2):447--490, 1996.

\bibitem[FRT54]{FRT}
J.~S. Frame, G.~de~B. Robinson, and R.~M. Thrall.
\newblock The hook graphs of the symmetric groups.
\newblock {\em Canadian J. Math.}, 6:316--324, 1954.

\bibitem[FW06]{ForresterWitte}
P.~J. Forrester and N.~S. Witte.
\newblock Boundary conditions associated with the {P}ainlev\'e {III{$'$}} and
  {V} evaluations of some random matrix averages.
\newblock {\em J. Phys. A}, 39(28):8983--8995, 2006.

\bibitem[FW08]{importanceSelberg}
P.~J. Forrester and S.~O. Warnaar.
\newblock The importance of the {S}elberg integral.
\newblock {\em Bull. Amer. Math. Soc. (N.S.)}, 45(4):489--534, 2008.

\bibitem[GR85]{scalar}
R.~D. Gupta and D.~St.~P. Richards.
\newblock Hypergeometric functions of scalar matrix argument are expressible in
  terms of classical hypergeometric functions.
\newblock {\em SIAM J. Math. Anal.}, 16(4):852--858, 1985.

\bibitem[GR91]{GrRi1991}
K.~I. Gross and D.~St.~P. Richards.
\newblock Hypergeometric functions on complex matrix space.
\newblock {\em Bull. Amer. Math. Soc. (N.S.)}, 24(2):349--355, 1991.

\bibitem[Hal02a]{Hall2002}
R.R. Hall.
\newblock Generalized {W}irtinger inequalities, random matrix theory, and the
  zeros of the {R}iemann zeta-function.
\newblock {\em J. Number Theory}, 97(2):397--409, 2002.

\bibitem[Hal02b]{HallWirtinger}
R.R. Hall.
\newblock A {W}irtinger type inequality and the spacing of the zeros of the
  {R}iemann zeta-function.
\newblock {\em J. Number Theory}, 93(2):235--245, 2002.

\bibitem[Hal04]{Hall2004}
R.R. Hall.
\newblock Large spaces between the zeros of the {R}iemann zeta-function and
  random matrix theory.
\newblock {\em J. Number Theory}, 109(2):240--265, 2004.

\bibitem[Hal08]{Hall}
R.~R. Hall.
\newblock Large spaces between the zeros of the {R}iemann zeta-function and
  random matrix theory. {II}.
\newblock {\em J. Number Theory}, 128(10):2836--2851, 2008.

\bibitem[HKO00]{HughesFirst}
C.~P. Hughes, J.~P. Keating, and N.~O'Connell.
\newblock Random matrix theory and the derivative of the {R}iemann zeta
  function.
\newblock {\em R. Soc. Lond. Proc. Ser. A Math. Phys. Eng. Sci.},
  456(2003):2611--2627, 2000.

\bibitem[Hug01]{HughesThesis}
C.~P. Hughes.
\newblock {\em On the Characteristic Polynomial of a Random Unitary Matrix and
  the Riemann Zeta Function}.
\newblock PhD thesis, University of Bristol, 2001.

\bibitem[Hug05]{HughesJoint}
C.~P. Hughes.
\newblock Joint moments of the {R}iemann zeta function and its derivative.
\newblock Personal communication, 2005.

\bibitem[Kan93]{ka1993}
J.~Kaneko.
\newblock Selberg integrals and hypergeometric functions associated with {J}ack
  polynomials.
\newblock {\em SIAM J. Math. Anal.}, 24(4):1086--1110, 1993.

\bibitem[KS00]{KS}
J.~P. Keating and N.~C. Snaith.
\newblock Random matrix theory and {$\zeta(1/2+it)$}.
\newblock {\em Comm. Math. Phys.}, 214(1):57--89, 2000.

\bibitem[Mez03]{Mezzadri2003}
F.~Mezzadri.
\newblock Random matrix theory and the zeros of {$\zeta'(s)$}.
\newblock {\em J. Phys. A}, 36(12):2945--2962, 2003.
\newblock Random matrix theory.

\bibitem[Nov09]{Novak}
J.~Novak.
\newblock {\em Topics in combinatorics and random matrix theory}.
\newblock PhD thesis, Queen's University, Ontario, September 2009.

\bibitem[OO97]{OkOl1997}
A.~Okunkov and G.~Olshanski{\u\i}.
\newblock Shifted {S}chur functions.
\newblock {\em Algebra i Analiz}, 9(2):73--146, 1997.

\bibitem[Sag01]{Sagan}
B.~E. Sagan.
\newblock {\em The symmetric group}, volume 203 of {\em Graduate Texts in
  Mathematics}.
\newblock Springer-Verlag, New York, second edition, 2001.
\newblock Representations, combinatorial algorithms, and symmetric functions.

\bibitem[Sak09]{Saker}
S.H. Saker.
\newblock Large spaces between the zeros of the {R}iemann zeta-function.
\newblock arXiv:0906.5458v1, June 2009.

\bibitem[Sta99]{StanleyBook}
R.~P. Stanley.
\newblock {\em Enumerative combinatorics. {V}ol. 2}, volume~62 of {\em
  Cambridge Studies in Advanced Mathematics}.
\newblock Cambridge University Press, Cambridge, 1999.
\newblock With a foreword by Gian-Carlo Rota and appendix 1 by Sergey Fomin.

\bibitem[Ste05]{Steuding}
J.~Steuding.
\newblock The {R}iemann zeta-function and predictions from {R}andom {M}atrix
  {T}heory.
\newblock February 2005.

\end{thebibliography}
\label{sec:biblio}

\end{document}